\newcommand{\Z}{{\mathbb Z}}
\newcommand{\C}{{\mathbb C}}
\newcommand{\N}{{\mathbb N}}
\newcommand{\I}{{\mathcal O}}
\newcommand{\G}{{\mathbb G}}
\newcommand{\ba}{\mbox{\boldmath{$\alpha$}}}
\newcommand{\bm}{\mbox{\boldmath{$\mu$}}}
\newtheorem{thm}{Theorem}[section]
\newtheorem{lemma}[thm]{Lemma}
\newtheorem{prop}[thm]{Proposition}
\begin{document}

\title[Representations of algebraic groups in characteristic $p$]{On abstract representations of the groups of rational points of algebraic groups in positive characteristic}

\author[M.~Boyarchenko]{Mitya Boyarchenko}



\email{dmitriy.boyarchenko@gmail.com}

\author[I.A.~Rapinchuk]{Igor A. Rapinchuk}

\address{Department of Mathematics, Harvard University, Cambridge, MA 02138}

\email{rapinch@math.harvard.edu}

\begin{abstract}
We analyze the structure of a large class of connected algebraic rings over an algebraically closed field of positive characteristic using Greenberg's perfectization functor. We then give applications to rigidity problems for representations of Chevalley groups, recovering, in particular, a rigidity theorem of Seitz \cite{Sei}.
\end{abstract}

\maketitle

\section{Introduction}\label{S-1}

Our focus in this paper will be on two topics. First, we investigate the structure of a large class of connected algebraic rings over algebraically closed fields of positive characteristic. Second, we apply our structural results to obtain some new rigidity statements for abstract linear representations of elementary subgroups of Chevalley groups over commutative rings, building on the work of the second author in \cite{IR}.

Algebraic rings were first formally introduced by M.~Greenberg in \cite{G2} in connection with his analysis of reduction properties of schemes over discrete valuation rings.  Subsequently, he studied them systematically in \cite{G}, where a number of foundational results were established. Fix an algebraically closed field $K$, and denote by $Sch/K$ the category of all $K$-schemes and by
$Var/ K$ the full subcategory of reduced schemes of finite type over $K$.
We define an {\it algebraic ring over $K$} to be a ring
object $A$ in the category $Var/K$ whose underlying scheme is affine \footnotemark. In other words, $A$ is an affine algebraic variety over $K$ equipped with the structure of an associative ring that is defined by morphisms $\ba \colon A \times A \to A$ (``addition") and $\bm \colon A \times A \to A$ (``multiplication"). \footnotetext{This definition coincides with the one used by Greenberg \cite{G}, except that we do not require the underlying variety of $A$ to be irreducible. Furthermore, using Chevalley's structure theorem for algebraic groups, one shows that if $A$ is an irreducible ring variety over $K$, then $A$ is automatically affine (\cite[Proposition 4.3]{G} or \cite[Theorem 2.21]{IR}); thus, the affiness assumption in the definition can, strictly speaking, be omitted, but we will include it for clarity.} Note that for any finite-dimensional associative $K$-algebra $B$, the addition and multiplication operations endow the underlying $K$-vector space with the structure of an algebraic ring, which we will denote by $\tilde{B}.$ By construction, the set of $K$-rational points $\tilde{B}(K)$ is simply $B.$



The correspondence $B \mapsto \tilde{B}$ gives rise to a functor
$$
\mathcal{F} \colon \mathcal{C} \to \mathcal{D},
$$
where $\mathcal{C}$ is the category of associative finite-dimensional $K$-algebras and $\mathcal{D}$ is the category of connected algebraic rings over $K$.
If $K$ is a field of characteristic 0, then Greenberg showed that $\mathcal{F}$ is an equivalence of categories
(see  \cite[Proposition 5.1]{G} as well as \cite[Proposition 2.14]{IR}; for a topological proof in the case where $K = \C$, the reader can consult \cite[Lemma 5]{KS}). On the other hand, it is easy to see that $\mathcal{F}$ is no longer essentially surjective if ${\rm char}~K = p > 0$.

For example, let $A$ be the algebraic ring over $K$ whose underlying additive group is $\G_a \oplus \G_a$ with the componentwise operation and whose multiplication operation is given by
$$
(x_1, y_1) \cdot (x_2, y_2) = (x_1 x_2, x_1^p y_2 + x_2^p y_1).
$$
It is straightforward to check that $A$ is \emph{not} of the form $\tilde{B}$ for any finite-dimensional $K$-algebra $B$ (see \cite[Example 2.17]{IR}). Next, let $B = K[\varepsilon]$, with $\varepsilon^2 = 0$, be the $K$-algebra of dual numbers and $\tilde{B}$ the corresponding algebraic ring over $K$. Clearly, the underlying additive group of $\tilde{B}$ is again $\G_a \oplus \G_a$ and the multiplication operation is
$$
(x_1, y_1) \cdot (x_2, y_2) = (x_1 x_2, x_1 y_2 + x_2 y_1).
$$
Notice that the map
$$
(x, y ) \mapsto (x, y^p)
$$
defines a morphism of {\it algebraic} rings $\varphi \colon \tilde{B} \to A$ that induces an isomorphism of {\it abstract} rings $\varphi_K \colon \tilde{B}(K) = B \to A(K)$ on the $K$-rational points. Thus, we see that while $A$ is not isomorphic to an object in the image of $\mathcal{F}$, it is still closely related to a finite-dimensional $K$-algebra, up to an inseparable isogeny.
Our first main result (and its proof) shows that the same holds true in the general case for connected algebraic rings $A$ over $K$ satisfying $p A = 0.$

\begin{thm}\label{T-1}
Let $A$ be a connected algebraic ring over an algebraically closed field $K$ of characteristic $p > 0$ such that $p A = 0$ and the ring $A(K)$ of $K$-points has an infinite center. Then there exist a finite-dimensional $K$-algebra $B$ and a homomorphism of algebraic rings $\psi \colon \tilde{B} \to A$ that induces an isomorphism $\psi_K \colon B = \tilde{B}(K) \to  A(K)$ of abstract rings.
\end{thm}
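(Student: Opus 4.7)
\medskip

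\noindent\textbf{Proof proposal.}
The example at the end of the introduction makes the target shape of $\psi$ clear: it should witness $A$ as differing from $\tilde{B}$ by a purely inseparable isogeny, i.e.\ a ``partial Frobenius''. The route signalled by the abstract is to pass to Greenberg's perfectization, identify $A^{\mathrm{perf}}$ with $\tilde{B}^{\mathrm{perf}}$ for an appropriate finite-dimensional $K$-algebra $B$, and then descend using that $\tilde{B}$ is of finite type.

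The first key step is to exploit the infinite-center hypothesis to equip $A$ with a $K$-linear structure. Since $Z(A) \subseteq A$ is a closed subring and $Z(A)(K)$ is infinite, the identity component $Z(A)^\circ$ is a positive-dimensional connected commutative algebraic ring satisfying $p Z(A)^\circ = 0$. I would argue that, after perfectization, $Z(A)^\circ$ contains a copy of $K$ as a subring---probably by exhibiting a suitable one-parameter additive subgroup in $Z(A)^\circ$ on which the ring multiplication, upon perfection, turns it into a copy of $K$; this uses crucially that $K$ is algebraically closed. Such an embedding $K \hookrightarrow Z(A^{\mathrm{perf}})$ endows $A^{\mathrm{perf}}$ with the structure of a perfect $K$-algebra object: the additive group of $A$, being a connected commutative unipotent $K$-group annihilated by $p$, becomes a perfect vector group after perfectization, and the interaction with the central copy of $K$ makes $A^{\mathrm{perf}}$ a perfect $K$-module scheme equipped with a $K$-bilinear associative multiplication.

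With this structure in hand, I would set $B := A(K)$ together with its induced $K$-vector space structure; the finite type of $A$ forces $\dim_K B < \infty$, and the multiplication on $A$ turns $B$ into a finite-dimensional associative $K$-algebra. A verification then shows $A^{\mathrm{perf}} \cong \tilde{B}^{\mathrm{perf}}$ as perfect algebraic rings. To descend, realize $A^{\mathrm{perf}}$ as $\varprojlim(\cdots \xrightarrow{F} A \xrightarrow{F} A)$ and likewise for $\tilde{B}^{\mathrm{perf}}$: since $\tilde{B}$ is of finite type, the composition $\tilde{B}^{\mathrm{perf}} \xrightarrow{\sim} A^{\mathrm{perf}} \to A$ factors through some finite iterate of Frobenius on $\tilde{B}$, yielding the desired morphism $\psi : \tilde{B} \to A$ of algebraic rings. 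Because $K$ is perfect (being algebraically closed), Frobenius is bijective on $K$-points, and $\psi_K$ turns out to be a bijection of abstract rings, exactly as in the motivating example.

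The main obstacle I anticipate is the first step---producing the copy of $K$ inside the (perfection of the) center. A positive-dimensional connected commutative algebraic ring in characteristic $p$ annihilated by $p$ can be quite inseparable over any subfield, and it is not automatic that it contains $K$ as a subring even after perfectization; this is a genuine structural statement deserving its own careful proof, presumably via a one-parameter subgroup argument in $Z(A)^\circ$ combined with the structure theory of connected commutative algebraic rings. A secondary technical point is ensuring that the resulting $K$-action is compatible with the multiplication on both sides (so that one really obtains a $K$-algebra rather than merely a ring with a $K$-action), but this should follow formally once the embedding is shown to land in the center.
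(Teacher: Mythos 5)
Your overall strategy coincides with the paper's: perfectize, identify $A^{\mathrm{perf}}$ with $\tilde{B}^{\mathrm{perf}}$ by first producing a central copy of $\tilde{K}^{\mathrm{perf}}$ and then showing the additive group becomes a perfect vector group with the scaling action, and finally descend through a finite stage of the inverse limit. However, the step you yourself flag as the main obstacle --- producing a ring morphism $\tilde{K}^{\mathrm{perf}} \to Z(A)^{\circ,\mathrm{perf}}$ --- is genuinely left open in your write-up, and the ``one-parameter additive subgroup'' route you suggest is not how one gets it: locating a curve closed under both operations and isomorphic to $\tilde{K}^{\mathrm{perf}}$ is essentially equivalent to the statement itself. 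The paper's argument (Lemma \ref{L-PerfRing1}) goes in the opposite direction: reduce to the case where the commutative perfect ring $C$ is indecomposable, so that $C = C_0^{\mathrm{perf}}$ with $C_0$ a local connected algebraic ring; Greenberg's structure theory then gives a surjection $f_0 \colon C_0 \to \tilde{K}$ with nilpotent kernel $J_0$; and --- this is the characteristic-$p$ trick --- since $C$ is commutative with $pC = 0$, the map $x \mapsto x^{p^n}$ is a ring endomorphism of $C$ which kills $J = J_0^{\mathrm{perf}}$ for $n$ large, hence factors through the quotient $\tilde{K}^{\mathrm{perf}}$ and yields the desired section $\tilde{K}^{\mathrm{perf}} \to C$. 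So the copy of $K$ is obtained by splitting a known quotient via Frobenius, not by exhibiting a subvariety directly; without this (or an equivalent) argument your proof is incomplete at its central point.

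Two smaller remarks. Once the $\tilde{K}^{\mathrm{perf}}$-module structure is in place, one still needs the content of Lemma \ref{L-PerfRing2}: that $V(K)$ is finite-dimensional over $K$ and that a basis of $V(K)$ yields an isomorphism $(\G_a^d)^{\mathrm{perf}} \simeq V$ intertwining the scaling action; your ``finite type forces $\dim_K B < \infty$'' is exactly the monomorphism argument there (via Lemma \ref{L-Morph}), so this part is fine in outline. In the descent step, the morphism $\tilde{B}^{\mathrm{perf}} \to A$ factors through $\tilde{B}^{(1/p^n)}$ rather than through $\tilde{B}$ itself; one must then observe, as the paper does, that $\tilde{B}^{(1/p^n)}$ is again of the form $\tilde{B_1}$ for a finite-dimensional algebra $B_1$ obtained by taking $p^n$-th roots of the structure constants, and replace $B$ by $B_1$. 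This is minor but needs to be said.
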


\noindent The proof of this result is based on the analysis of so-called {\it perfect algebraic rings} and will be given in \S \ref{S-3}.

The rest of the paper is devoted to an application of Theorem \ref{T-1} to rigidity questions for representations of Chevalley groups over rings. Before formulating our second result, we briefly recall the set-up developed in \cite{IR} for the analysis of finite-dimensional representations of Chevalley groups. Suppose $\Phi$ is a reduced irreducible root system of rank $\geq 2$ and let $G = G(\Phi)$ be the corresponding universal Chevalley-Demazure group scheme over $\Z.$ Given a commutative ring $R$, we say that $(\Phi, R)$ is a {\it nice pair} if $2 \in R^{\times}$ if $\Phi$ contains a subsystem of type $B_2$, and $2,3 \in R^{\times}$ if $\Phi$ is of type $G_2.$ Furthermore, we denote by $E(\Phi, R)$ the {\it elementary subgroup} of $G(R)$, i.e. the subgroup generated by the $R$-points of the canonical one-parameter root subgroups. In \cite{IR}, we analyzed abstract finite-dimensional representations
$$
\rho \colon E(\Phi, R) \to GL_n (K)
$$
when $K$ is an algebraically closed field of characteristic 0 and proved that in many situations, such representations admit a {\it standard description}, i.e., after possibly passing to a finite-index subgroup, $\rho$ can be factored as a group homomorphism arising from a ring homomorphism, followed by a morphism of algebraic groups --- see \cite[Main Theorem]{IR} for the precise formulation. In particular, this confirmed a conjecture of Borel and Tits \cite{BT} for split groups.

One of the principal ingredients of the proof was the construction of an algebraic ring $A$ over $K$ naturally associated to the representation $\rho$ --- see \cite[Theorem 3.1]{IR} for the full details as well as \S \ref{S-4} below for a brief reminder. While this part can be carried out over any algebraically closed field, the remainder of the argument in {\it loc. cit} exploited the equivalence of categories given by $\mathcal{F}$ discussed earlier. The structural result of Theorem \ref{T-1} allows one to extend this approach to positive characteristic. In the following statement, we use the notations $G = G(\Phi)$ and $E(\Phi, R)$ introduced above.

\begin{thm}\label{T-2}
Let $K$ be an algebraically closed field of characteristic $p > 0$, $\Phi$ a reduced irreducible root system of rank $\geq 2$, and $R$ a commutative semilocal noetherian ring satisfying $pR = 0$ such that $(\Phi, R)$ is a nice pair. 
Suppose $\rho \colon E(\Phi, R) \to GL_n (K)$ is a representation and set $H = \overline{\rho (E(\Phi, R))}$ (Zariski closure). Then there exists a commutative finite-dimensional $K$-algebra $B$, a ring homomorphism $f \colon R \to B$ with Zariski-dense image, and a morphism of algebraic groups $\sigma \colon G(B) \to H$ such that for a suitable finite-index subgroup $\Gamma \subset E(\Phi, R)$, we have
$$
\rho \vert_{\Gamma} = (\sigma \circ F) \vert_{\Gamma},
$$
where $F \colon E(\Phi, R) \to G(B)$ is the group homomorphism induced by $f.$
\end{thm}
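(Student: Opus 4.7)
The plan is to mimic the strategy of \cite{IR}, using Theorem \ref{T-1} in place of the equivalence of categories $\mathcal{F}$, which is unavailable in characteristic $p$. The first step is to invoke the construction of \cite[Theorem 3.1]{IR}: applied over the algebraically closed field $K$, and after passage to a finite-index subgroup $\Gamma \subset E(\Phi,R)$, it should produce a connected algebraic ring $A$ over $K$, a ring homomorphism $i \colon R \to A(K)$ with Zariski-dense image, and a morphism of algebraic groups $\tau \colon G(A) \to H$ (where $G(A)$ denotes the algebraic group over $K$ obtained by applying the Chevalley--Demazure functor to the algebraic ring $A$) such that $\rho\vert_\Gamma = \tau \circ F_A\vert_\Gamma$, where $F_A \colon E(\Phi,R) \to G(A)(K)$ is the group homomorphism induced by $i$. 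The nice-pair assumption and the semilocal noetherian hypothesis on $R$ are precisely what guarantee that the commutator-calculus and finite-generation inputs to this construction remain valid in positive characteristic.

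Next, I would verify the hypotheses of Theorem \ref{T-1} for $A$. Because $pR = 0$ and $i(R)$ is Zariski-dense, the endomorphism $p \cdot \mathrm{id}_A$ vanishes on a dense subset of $A$, hence $pA = 0$. Since $R$ is commutative, so is $i(R)$, and by Zariski-density $A$ is a commutative algebraic ring; in particular its center equals $A(K)$, which is infinite whenever $A$ is positive-dimensional (the zero-dimensional case is trivial). Theorem \ref{T-1} then yields a commutative finite-dimensional $K$-algebra $B$ and a homomorphism $\psi \colon \tilde{B} \to A$ of algebraic rings such that $\psi_K \colon B \to A(K)$ is an isomorphism of abstract rings. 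Setting $f := \psi_K^{-1} \circ i \colon R \to B$ produces a ring homomorphism whose image is Zariski-dense in $B$.

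To conclude, apply the Chevalley--Demazure functor to $\psi$; this yields a morphism of algebraic groups $\psi_* \colon G(B) \to G(A)$, and composition with $\tau$ defines the desired $\sigma := \tau \circ \psi_* \colon G(B) \to H$. The identity $\psi_K \circ f = i$ translates, at the level of group homomorphisms $E(\Phi,R) \to G(A)(K)$, into $F_A = \psi_* \circ F$; restricting to $\Gamma$, we obtain
$$
\rho\vert_\Gamma = \tau \circ F_A\vert_\Gamma = \tau \circ \psi_* \circ F\vert_\Gamma = \sigma \circ F\vert_\Gamma,
$$
which is the required conclusion.

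The main obstacle is executing the first step in positive characteristic. The construction of the algebraic ring and the morphism $\tau$ in \cite{IR} is presented in characteristic zero, and although its core ingredients (Chevalley commutator formulas, elementary-subgroup generation in semilocal rings, closure inside $GL_n(K)$) are essentially characteristic-free under the nice-pair assumption, one must check that the output is indeed a bona fide algebraic ring with $\tau$ a morphism of algebraic varieties, and that connectedness of $A$ can be arranged by passing to a finite-index subgroup. Once this is carried out, Theorem \ref{T-1} handles precisely the structural step that was automatic in characteristic zero, and the remainder of the proof is essentially formal.
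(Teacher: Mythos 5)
Your overall architecture is right in one respect: you correctly see that the job of Theorem \ref{T-1} is to replace the algebraic ring $A$ coming out of \cite[Theorem 3.1]{IR} by a genuine finite-dimensional $K$-algebra $B$ with $B \simeq A(K)$ as abstract rings, and your verification that $pA=0$, that $A$ is commutative, and that $f := \psi_K^{-1}\circ i$ has dense image matches what the paper does. But there is a genuine gap in your first step: \cite[Theorem 3.1]{IR} does \emph{not} produce a morphism of algebraic groups $\tau \colon G(A) \to H$ with $\rho\vert_\Gamma = \tau\circ F_A\vert_\Gamma$. It only produces the ring homomorphism $i \colon R \to A(K)$ together with injective regular maps $\psi_\alpha \colon A(K) \to H$, one for each root, satisfying $\rho(e_\alpha(t)) = \psi_\alpha(i(t))$ — i.e.\ a parametrization of the images of the root subgroups, nothing more. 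The passage from these root-subgroup parametrizations to a homomorphism defined on all of $G(B)$ is the substantive part of the rigidity argument, and your proposal assumes it away. Concretely, the paper (following \cite{IR}) first applies Theorem \ref{T-1} to get $B$, then verifies the Steinberg relations to lift $\rho$ to a representation $\tilde\tau$ of the Steinberg group $\mathrm{St}(\Phi, B')$, and only then uses the \emph{semilocal} hypothesis on $R$ to descend $\tilde\tau\vert_{\mathrm{St}(\Phi,B)}$ to a homomorphism $\sigma$ on $G(B)$ up to a finite-index subgroup; algebraicity of $\sigma$ is a separate step (\cite[Proposition 6.3]{IR}). Your proposal never uses semilocality where it is actually needed (you invoke it only for the construction of $A$, where plain noetherianity suffices), which is a symptom of the missing steps.

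Two smaller points. First, the order of operations matters: you propose forming ``$G(A)$ as an algebraic group over $K$'' for an arbitrary algebraic ring $A$, but the paper only forms $G(B)$ for the finite-dimensional algebra $B$, via restriction of scalars; this is another reason to pass from $A$ to $B$ \emph{before} attempting to build the group morphism. Second, the algebraic ring from \cite[Theorem 3.1]{IR} need not be connected; the paper handles this by splitting $A(K) = A'(K)\oplus C$ with $A'$ connected and $C$ a finite ring (using noetherianity of $R$), applying Theorem \ref{T-1} only to $A'$, and discarding the $C$-part when restricting to the finite-index subgroup $\Gamma$. Your remark that connectedness ``can be arranged by passing to a finite-index subgroup'' is pointing in the right direction but needs this explicit decomposition to make sense, since Theorem \ref{T-1} is stated only for connected algebraic rings.
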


\noindent (Here, we view the group $G(B)$ as an algebraic group over $K$ using the functor of restriction of scalars.)


One noteworthy application of this result, which will be discussed in Proposition \ref{P-Seitz}, is that it allows us to
recover a rigidity theorem of Seitz \cite{Sei}, which was originally proved using techniques similar to those of Borel and Tits \cite{BT}.

The paper is organized as follows. In \S\ref{S-2} we discuss perfect algebraic rings and establish several results that are then used in \S\ref{S-3} to complete the proof of Theorem \ref{T-1}. We then prove Theorem \ref{T-2} in \S\ref{S-4} and discuss some applications.

\vskip2mm

\noindent {\bf Acknowledgments.} The idea to pursue this project originated after the second-named author gave a talk based on \cite{IR} at the University of Michigan. He would like to thank Mitya Boyarchenko and Gopal Prasad for their hospitality. I.R. was supported by an NSF Postdoctoral Fellowship at Harvard University during the preparation of this paper.

\section{Preliminaries on perfect algebraic rings}\label{S-2}

In this section, we review several facts about perfect algebraic rings that will be needed for the proof of Theorem \ref{T-1}. Throughout this section, $K$ will denote a fixed algebraically closed field of characteristic $p > 0.$

Recall that for a scheme $X \in Sch/K$, the {\it absolute Frobenius} is the morphism of schemes $$\Phi \colon X \to X$$ which is the identity on the underlying topological space and the map $f \mapsto f^p$ on the sections of the structure sheaf $\mathcal{O}_X$ (note that $\Phi$ is \emph{not} a morphism of $K$-schemes). In analogy with the case of fields, one says that $X$ is {\it perfect} if $\Phi$ is an isomorphism of schemes. For example, if $R$ is a commutative $K$-algebra, then the affine scheme $\text{Spec}(R)$ is perfect if and only if $R$ is a perfect ring (i.e., the Frobenius map $\varphi \colon R \to R$, $a \mapsto a^p$, is bijective). We refer the reader to \cite{G1}, \cite{S}, and \cite[Appendix A]{Boyar} for further details on perfect schemes.

Let us denote by $Perf/K$ the full subcategory of $Sch/K$ consisting of perfect schemes. In \cite{G1}, Greenberg showed that the inclusion functor $Perf/K \hookrightarrow Sch/K$ admits a left adjoint
$$
Sch/K \to Perf/K, \ \ \ X \mapsto X^{perf},
$$
which, following \cite{Boyar}, we will refer to as the {\it perfectization functor} (Greenberg used the term {\it perfect closure} for $X^{perf}$). For the sake of completeness, as well as to introduce notations that will be needed later, we briefly recall the main points of
the construction, which proceeds in two steps: first one defines the perfectization functor for commutative $K$-algebras and then one ``globalizes" it to the case of $K$-schemes.
More precisely, if $R$ is a commutative $K$-algebra,
then its perfectization $R^{perf}$ is defined as the direct limit of the sequence
$$
R \stackrel{\varphi}{\longrightarrow} R \stackrel{\varphi}{\longrightarrow} R \stackrel{\varphi}{\longrightarrow} \dots,
$$
where $\varphi(x) = x^p.$ One easily shows that
$R^{perf}$ is a {\it perfect} ring equipped with a $K$-algebra structure coming from the structure homomorphism of $K$ into the first term of the above sequence. Moreover, there is a canonical ring homomorphism
$\sigma \colon R \to R^{perf}$, and the pair
$(R^{perf}, \sigma)$ is determined uniquely up to (unique) isomorphism by the following universal property. {\it For any perfect ring $B$ and any ring homomorphism $\psi \colon R \to B$, there exists a unique homomorphism $\psi^* \colon R^{perf}~\to~B$ satisfying $\psi^* \circ \sigma = \psi.$}


Next, given $X \in Sch/K$ with structure sheaf $\I_X,$ one shows that there exists a perfect scheme $X^{perf}$ whose underlying topological space is $X$ and whose structure sheaf is given by
$$
U \mapsto \I_X(U)^{perf} \ \ \ \text{for any open} \ U \subset X.
$$
The scheme $X^{perf}$ can be described more explicitly using the absolute Frobenius as follows.
If $\pi \colon X \to \text{Spec}~K$ is the structure morphism, then $\pi \circ \Phi = \Phi \circ \pi$ is another morphism from $X$ to $\text{Spec}~K$, which defines an object of $Sch/K$ that we will denote by $X^{(1/p)}.$ By construction, it is equipped with a natural morhpism $X^{(1/p)} \to X$ of $K$-schemes given by $\Phi.$ Applying the same construction to $X^{(1/p)},$ we obtain another $K$-scheme, denoted by $X^{(1/p^2)}$, together with a $K$-morphism $X^{(1/p^2)} \to X^{(1/p)}.$ Then, in view of the above description of the perfectization of a $K$-algebra, we see that $X^{perf}$ is the inverse limit of the sequence
$$
\cdots \to X^{(1/p^2)} \to X^{(1/p)} \to X.
$$
The natural map $\tau \colon X^{perf} \to X$ is a homeomorphism on the underlying topological spaces, and it induces a bijection $X^{perf} (R) \stackrel{\simeq}{\longrightarrow} X(R)$ for any perfect $K$-algebra $R.$ Moreover, as above, the
pair $(X^{perf}, \tau)$ is universal with respect to morphisms of perfect schemes into $X$. In particular, any morphism $g \colon X \to Y$ of schemes induces a morphism $g^{perf} \colon X^{perf} \to Y^{perf}$ between the corresponding perfectizations.

\vskip2mm

\noindent {\bf Remark 2.1.} Suppose $X, Y \in Var/K$ are affine $K$-varieties and let
$f \colon X^{perf}~\to~Y^{perf}$ be a $K$-morphism. Then, an argument similar to the one used in the proof of \cite[Proposition 2]{S} shows
that there exists a $K$-morphism $X^{1/p^n} \stackrel{g}{\longrightarrow} Y$ such that the composition
$$
X^{(1/p^n), perf} \to X^{perf} \stackrel{f}{\longrightarrow} Y^{perf}
$$
is equal to $g^{perf}$ (where the first arrow is induced by the $n$-fold composition $\Phi^n \colon X^{1/p^n} \to X$).

\vskip2mm

\addtocounter{thm}{1}

Next, notice that
the perfectization functor preserves products since it has a left adjoint. In particular,
the perfectization of a group or ring object in $Sch/K$ possesses the same structure. We define a {\it perfect algebraic ring} to be a ring object in $Perf/K$ that is isomorphic to $A^{perf}$ for some algebraic ring $A$ over $K$. Similarly, a {\it perfect unipotent group} is a group object in $Perf/K$ that is isomorphic to $G^{perf}$ for some unipotent affine algebraic group $G$ over $K$. Serre \cite{S} showed that the category of commutative perfect unipotent groups over $K$ is an abelian category.
The following lemma describes a useful property of morphisms in this category.

\begin{lemma}\label{L-Morph}
A morphism $f \colon G \to H$ of commutative perfect unipotent groups over $K$ is an isomorphism (resp. a monomorphism, an epimorphism) if and only if the induced map on $K$-rational points $f_K \colon G(K) \to H(K)$ is bijective (resp. injective, surjective).
\end{lemma}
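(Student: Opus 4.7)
My plan is to deduce the lemma from Serre's theorem (recalled just above the statement) that commutative perfect unipotent groups over $K$ form an abelian category, combined with two auxiliary observations:

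(a) For any commutative perfect unipotent group $G$ over $K$, one has $G = 0$ if and only if $G(K) = 0$.

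(b) The $K$-points functor $G \mapsto G(K)$ is exact on this abelian category, i.e., it commutes with kernels and cokernels.

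Granting (a) and (b), the three equivalences follow formally. In an abelian category, $f$ is a monomorphism (resp.\ epimorphism, isomorphism) if and only if $\ker f = 0$ (resp.\ $\coker f = 0$, both vanish). Exactness in (b) identifies $(\ker f)(K)$ with $\ker(f_K)$ and $(\coker f)(K)$ with $\coker(f_K)$, and then (a) translates each vanishing statement into the corresponding statement about $f_K$.

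For (a), I would write $G = G_0^{perf}$ for some commutative unipotent algebraic group $G_0$. Since $K$ is perfect, $G(K) = G_0(K)$. If this is trivial, then $G_0^{red}$ is a smooth commutative unipotent group with no nontrivial $K$-points; by the standard $\G_a$-filtration for smooth commutative unipotent groups, this forces $G_0^{red}$ to be trivial. Hence $G_0$ is infinitesimal, and the perfectization of an infinitesimal affine scheme is zero (the Frobenius on its coordinate ring is eventually nilpotent on the augmentation ideal), so $G = 0$.

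For (b), preservation of kernels is formal: kernels are limits and $G \mapsto G(K) = \Hom_K(\mathrm{Spec}\,K, G)$ preserves limits. The cokernel assertion is the main obstacle. Using Remark 2.1, I would write $f = g^{perf}$ for a morphism $g \colon G_0^{(1/p^n)} \to H_0$ of commutative unipotent algebraic groups and argue that the categorical cokernel of $f$ is the perfectization of the algebraic-group quotient $H_0/\overline{\im(g)}$. Surjectivity of $(H_0/\overline{\im g})(K)$ onto the naive cokernel $H(K)/\im(f_K)$ then follows from the triviality of torsors for smooth connected unipotent groups over an algebraically closed field (Lang's theorem, or direct induction along a $\G_a$-filtration).

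The hard part will be verifying rigorously that the categorical cokernel in the abelian category of perfect unipotent groups coincides with the perfectization of the scheme-theoretic quotient, and in managing the reduction to the smooth case needed for the Lang-type splitting; once these points are pinned down, the rest is formal.
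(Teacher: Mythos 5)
Your proposal is correct and follows essentially the same route as the paper: both reduce the statement to the vanishing of the kernel (resp.\ cokernel) of $f$ in Serre's abelian category of perfect unipotent groups, and then use that perfect schemes are reduced to detect that vanishing on $K$-points. The only real difference is one of explicitness: you spell out the compatibility of the $K$-points functor with cokernels (via the identification of the categorical cokernel with the perfectization of the scheme-theoretic quotient and the Lang-type surjectivity of quotient maps on $K$-points), a step the paper's sketch passes over with ``one uses the same argument with $C = \mathrm{coker}\, f$.''
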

\begin{proof}
Though this result is well-known, we were unable to find a direct reference, so we sketch the argument for completeness.
Suppose first that $f_K \colon G(K) \to H(K)$ is injective and let $C$ be the kernel of $f$. We need to show that $C = 0.$ Our assumptions imply that $C$ is zero-dimensional and that $C(K)$ has only one point.
Therefore, our claim is
reduced to the following statement: if $A$ is a perfect commutative finite-dimensional algebra over $K$ such that there is only one $K$-algebra homomorphism $A \to K$, then $A \simeq K.$ Indeed, since $A$ is perfect, it is reduced,
so its Jacobson radical is zero. Since $K$ is algebraically closed, it follows that $A \simeq K \times \cdots \times K$; but, by our assumption, there is a unique morphism $A \to K$, so $A \simeq K,$ as required. In the case that $f_K$ is surjective, one uses the same argument with $C = \text{coker}~f.$
\end{proof}


\section{Proof of Theorem \ref{T-1}}\label{S-3}

We now turn to the proof of Theorem \ref{T-1}. As before, $K$ will denote an algebraically closed field of characteristic $p > 0.$

The key step in the argument is to show that perfect algebraic rings of characteristic $p$ arise from finite-dimensional $K$-algebras.
The precise statement is as follows.

\begin{prop}\label{P-PerfRing}
Let $A$ be a connected perfect algebraic ring over $K$. Assume that $p A = 0$ and the ring $A(K)$ of $K$-points has an infinite center.
Then there exists a finite-dimensional $K$-algebra $B$ such that $A \simeq \tilde{B}^{perf}$ as perfect algebraic rings over $K$.
\end{prop}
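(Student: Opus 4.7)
I would take $B$ to be the ring $A(K)$ endowed with a $K$-algebra structure extracted from the center, and construct the desired isomorphism $\tilde B^{perf} \simeq A$ by extending the identity map on $K$-points. The argument proceeds in three steps: (i) determine the additive structure of $A$; (ii) use the infinite center hypothesis to construct a central morphism of perfect algebraic rings $\iota : \tilde K^{perf} \to A$, which supplies the $K$-algebra structure on $A(K)$; (iii) descend the resulting identification of $K$-points to an isomorphism of perfect algebraic rings, using Remark~2.1 and Lemma~\ref{L-Morph}.

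\textbf{Steps 1 and 3.} The additive group of $A$ is a connected commutative perfect unipotent $K$-group killed by $p$, so by Serre's theory \cite{S} it is (non-canonically) isomorphic to $(\G_a^{perf})^n$ for some $n \geq 0$; in particular $|A(K)| = |K|^n$. Granted the central morphism $\iota$ from Step~2, I would define $c \cdot a := \iota_K(c) \cdot a$ to make $B := A(K)$ into a $K$-algebra of dimension $n$, and the identity would then be a ring isomorphism $B = \tilde B(K) \to A(K)$. By Remark~2.1, this is induced by a $K$-morphism of algebraic rings $\tilde B^{(1/p^m)} \to A$ for some $m \geq 0$, whose perfectization $\psi : \tilde B^{perf} \to A$ is bijective on $K$-points. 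Applying Lemma~\ref{L-Morph} to the underlying morphism of perfect commutative unipotent additive groups shows that $\psi$ is an isomorphism of additive groups, and by construction it preserves multiplication, so $\psi$ is the required isomorphism of perfect algebraic rings.

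\textbf{Step 2.} The scheme-theoretic center $Z(A) \subset A$, defined by the vanishing of the commutator morphism $xa - ax$ as $a$ varies over $A$, is a closed perfect subring of $A$. Since $Z(A)(K) = Z(A(K))$ is infinite and $K$ is algebraically closed, $Z(A)$ is positive-dimensional; a careful analysis of its connected components together with the unit $1_A$ should produce a nontrivial, connected, commutative, unital perfect algebraic subring $C \subset A$ with $pC = 0$ and infinite $K$-points. Applying the commutative case of the proposition to $C$---established separately in order to avoid circularity---would yield $C \simeq \tilde D^{perf}$ for a finite-dimensional commutative unital $K$-algebra $D$; the canonical unit-preserving inclusion $K \hookrightarrow D$, $c \mapsto c \cdot 1_D$, would then induce a morphism $\tilde K \to \tilde D$, and after perfectization and composition with $C \hookrightarrow A$, produce the desired central morphism $\iota : \tilde K^{perf} \to A$.

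\textbf{Main obstacle.} The principal difficulty is the commutative case of the proposition used in Step~2. I would expect this sub-problem to be substantially easier than the general statement because, for a connected commutative unital perfect algebraic ring $C$ over $K$ with $pC = 0$, one can exploit the symmetry of the multiplication and analyze the idempotent and nilradical structure of $C(K)$ directly: since $K$ is algebraically closed, the reduction of $C(K)$ modulo its nilradical must be a finite product of copies of $K$, and this rigidity should allow the direct construction of a unit-preserving morphism $\tilde K \to C$ without any appeal to a non-commutative center.
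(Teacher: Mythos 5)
Your overall architecture matches the paper's (extract a connected commutative piece of the center, produce a central morphism $\tilde{K}^{perf}\to A$, use it to make $A(K)$ a finite-dimensional $K$-algebra $B$, and identify $A$ with $\tilde{B}^{perf}$), but the two steps you leave open are exactly where the content lies. The first gap is the central morphism. You reduce it to "the commutative case of the proposition," to be proved by observing that $C(K)$ modulo its nilradical is a product of copies of $K$ and asserting that "this rigidity should allow the direct construction" of $\tilde{K}\to C$. That assertion is the whole problem: Greenberg's structure theory gives a surjection $C\to C/J\simeq\tilde{K}^{perf}$ with nilpotent kernel $J$, and what one needs is a \emph{section} of it by a morphism of algebraic rings --- precisely what fails for non-perfect algebraic rings (the dual-numbers example in the introduction). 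The paper's Lemma \ref{L-PerfRing1} produces the section by the one genuinely characteristic-$p$ idea in the argument: since $C$ is commutative with $pC=0$, the map $x\mapsto x^{p^n}$ is a morphism of \emph{perfect} algebraic rings $C\to C$ which, for $n$ large, annihilates $J$ and therefore factors through $C/J\simeq\tilde{K}^{perf}$, yielding $\tilde{K}^{perf}\to C$. Your proposal never identifies this mechanism; moreover your detour through the full commutative case (first get $C\simeq\tilde{D}^{perf}$, then use the unit of $D$) is backwards, since proving $C\simeq\tilde{D}^{perf}$ already requires both the central morphism and the module lemma, so the "easier sub-problem" is not actually easier than what it is meant to establish.

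The second gap is in Steps 1 and 3. A ring isomorphism $\tilde{B}(K)\to A(K)$ is merely a bijection of $K$-points; Remark 2.1 converts an \emph{already given} morphism $X^{perf}\to Y^{perf}$ into a morphism $X^{(1/p^n)}\to Y$, it does not manufacture a morphism of schemes out of a map on points. The paper's Lemma \ref{L-PerfRing2} is what supplies the missing morphism: choosing a $K$-basis $v_1,\dots,v_d$ of $A(K)$, the map $\G_a^d\to A$, $(t_i)\mapsto\sum\iota(t_i)\cdot v_i$, is a composite of the algebraic ring operations, hence a genuine morphism of perfect unipotent groups, and Lemma \ref{L-Morph} shows it is an isomorphism. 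The same argument is also needed for your unproved claim that $\dim_K A(K)=n$: knowing $|A(K)|=|K|^n$ does not bound the $K$-dimension of $A(K)$, whereas the monomorphism $\G_a^m\to A$ attached to any $K$-independent $m$-tuple bounds $m$ by $\dim A$. (Remark 2.1 plays no role in the proposition itself; in the paper it is used only afterwards, in deducing Theorem \ref{T-1}, to descend the isomorphism of perfectizations to a morphism $\tilde{B}^{(1/p^n)}\to A$ of honest algebraic rings.)
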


We will deduce this statement from the next two lemmas. The first gives
some information on the structure of {\it commutative} perfect algebraic rings of characteristic $p.$

\begin{lemma}\label{L-PerfRing1}
Let $C$ be a nontrivial connected commutative perfect algebraic ring over $K$ such that $p C = 0$.  Then there exists a morphism $\tilde{K}^{perf} \to C$ of perfect algebraic rings over $K$.
\end{lemma}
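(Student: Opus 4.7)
By Remark~2.1, it suffices to produce a morphism of ordinary algebraic rings $\psi \colon \tilde K^{(1/p^n)} \to A$ for some $n \ge 0$, where $A$ is a commutative connected algebraic ring over $K$ with $pA = 0$ and $C = A^{perf}$; perfectizing $\psi$ then yields the required morphism $\tilde K^{perf} \to C$. Concretely, we are looking for a $K$-morphism of schemes $\psi \colon \Af^{1,(1/p^n)} \to A$ with $\psi(0) = 0_A$ and $\psi(1) = 1_A$, respecting both addition and multiplication.

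To construct $\psi$, I would analyze the underlying additive group of $A$, which (since $pA = 0$) is a connected commutative unipotent $K$-group killed by $p$. Choosing appropriate coordinates coming from a composition series whose successive quotients are isomorphic to $\G_a$, an additive $K$-morphism $\Af^{1,(1/p^n)} \to A$ is encoded component-by-component by additive polynomials of the form $\sum_{j \ge -n} a_{ij}\, s^{p^j}$, and the multiplicativity requirement together with $\psi(1) = 1_A$ produces polynomial constraints on the coefficients $a_{ij}$. The top-layer analysis (projecting onto the one-dimensional connected commutative algebraic ring $A / A_{d-1}$, which must be isomorphic to $\tilde K$) should force the leading coefficients to be central orthogonal idempotents of $A(K)$ summing to $1_A$; connectedness of $A$ eliminates all but one, leaving a normalized leading term of the form $s \mapsto s^{p^{j_0}} \cdot 1_A$ for some integer $j_0$.

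The main technical work --- and the reason one cannot stay inside the non-perfect category --- is to extend this top-level datum inductively down the rest of the filtration. At each successive layer, the multiplicativity constraint becomes an additive functional equation whose solvability requires extracting $p$-th roots, i.e.\ inverting Frobenius. Over an \emph{arbitrary} algebraic ring $A$ such roots need not exist, but they do exist in $A^{perf} = C$, which is precisely why passing to the perfect category is forced. Serre's theorem that commutative perfect unipotent groups over $K$ form an abelian category supplies the homological framework in which the relevant obstructions (cokernels of Frobenius-semilinear operators on additive polynomial spaces) vanish after taking sufficiently many $p$-power roots. The principal obstacle I anticipate is the uniform bookkeeping required to show that a single value of $n$ suffices at every layer, so that the final $\psi$ lands in a single Frobenius twist $\tilde K^{(1/p^{n_0})}$; once this is established, perfectizing $\psi$ produces the desired morphism $\tilde K^{perf} \to C$.
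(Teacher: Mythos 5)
There is a genuine gap: your proposal is a plan rather than a proof, and the step that carries all the content is left unjustified. The entire difficulty of the lemma is concentrated in your sentence asserting that ``the relevant obstructions \dots vanish after taking sufficiently many $p$-power roots.'' Serre's theorem that commutative perfect unipotent groups form an abelian category gives you a framework for the \emph{additive} structure only; it says nothing about why the \emph{multiplicative} functional equations you set up layer by layer become solvable after inverting Frobenius, and no argument for this is offered. In addition, one of your intermediate claims is false as stated: connectedness of $A$ does \emph{not} eliminate nontrivial idempotents (the algebraic ring $\tilde K \times \tilde K$ is connected as a variety but $A(K)$ has two orthogonal idempotents summing to $1$), and your ``top layer'' $A/A_{d-1}$ comes from a composition series of the additive group, so it need not be a ring quotient at all. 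Finally, reducing via Remark~2.1 to a morphism $\tilde K^{(1/p^n)} \to A$ at a finite Frobenius level is not obviously achievable and is not how one should attack the problem; the natural construction lives at the perfect level from the start.

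For comparison, the paper's argument avoids all coordinate bookkeeping. One first reduces to the case where $C$ is indecomposable, so that $C = C_0^{perf}$ with $C_0$ a \emph{local} connected algebraic ring (Greenberg). Greenberg's structure theory then gives a surjection $f_0 \colon C_0 \to \tilde K$ with nilpotent kernel $J_0$; perfectizing yields a surjection $f \colon C \to \tilde K^{perf}$ with nilpotent kernel $J$. Since $C$ is commutative with $pC = 0$, the map $x \mapsto x^{p^n}$ is a morphism of perfect algebraic rings $C \to C$; choosing $n$ with $J^{p^n} = 0$, this morphism kills $J$ and hence factors through $C/J \simeq \tilde K^{perf}$, producing the desired morphism $\tilde K^{perf} \to C$. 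This is where the ``extraction of $p$-th roots'' you correctly sense is needed actually enters --- but as a one-line splitting trick using the Frobenius endomorphism of the ring, not as an inductive obstruction-vanishing argument. You should either carry out your inductive scheme in full (which would be substantially harder and requires repairing the idempotent claim) or adopt the local-ring-plus-Frobenius-power argument.
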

\begin{proof}
First, without loss of generality, we may assume that $C$ is indecomposable, since if the statement is true for rings $C_1$ and $C_2$, then it is also true for their product. Using \cite[Proposition 8.1]{G} or \cite[Proposition 2.20]{IR}, we can then write
$C = C_0^{perf},$ where $C_0$ is a local connected algebraic ring. Consequently, there exists a surjective morphism $f_0 \colon C_0 \to \tilde{K}$ whose kernel $J_0$ is a nilpotent ideal of $C_0$ (see \cite[Proposition 7.1]{G} or \cite[Proposition 2.19]{IR}). Now, since the underlying additive group of an algebraic ring is unipotent (see \cite[Proposition 4.3]{G}), Lemma \ref{L-Morph} implies that on passing to perfectizations, we obtain
a surjective morphism $f = f_0^{perf} \colon C \to \tilde{K}^{perf}$ whose kernel $J = J_0^{perf}$ is a nilpotent ideal of $C$. Take $n$ sufficiently large so that $J^{p^n} = 0.$ Then the map $C \to C$, $x \mapsto x^{p^n}$,
is a morphism of perfect algebraic rings,  which, by construction, factors through $\tilde{K}^{perf} \to C,$ yielding the required morphism.
\end{proof}

For the second lemma, we recall that a perfect unipotent group $V$ over $K$ is said to be a $\tilde{K}^{perf}$-module if there exists a morphism $\tilde{K}^{perf} \times V \to V$ satisfying the usual module axioms.

\begin{lemma}\label{L-PerfRing2}
Suppose $V$ is a perfect unipotent group over $K$ equipped with the structure
of a $\tilde{K}^{perf}$-module. 
Then there exists an isomorphism $\G_a^d \stackrel{\simeq}{\longrightarrow} V$ of perfect unipotent groups that intertwines the natural action of $\tilde{K}^{perf}$ on $\G_a^d$ by scaling and the given action of $\tilde{K}^{perf}$ on $V$.
\end{lemma}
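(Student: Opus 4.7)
The plan is to build the isomorphism directly: choose a $K$-basis of $V(K)$ (regarded as a $K$-vector space via the module action) and use the action morphism to exhibit the corresponding map out of $\tilde{K}^{perf,d}$. A preliminary observation is that $K$, being algebraically closed of characteristic $p$, is perfect, so the universal property of perfectization gives $\tilde{K}^{perf}(K) = \tilde{K}(K) = K$. Evaluating the given action morphism $\mu \colon \tilde{K}^{perf} \times V \to V$ on $K$-points therefore endows the abelian group $V(K)$ with a genuine $K$-vector space structure.

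The central technical step is to prove that $d := \dim_K V(K)$ is finite. Given any $K$-linearly independent $v_1, \ldots, v_n \in V(K)$, I would form the morphism of perfect schemes
$$
\varphi_n \colon \tilde{K}^{perf,n} \longrightarrow V, \qquad (t_1, \ldots, t_n) \longmapsto \sum_{i=1}^n t_i \cdot v_i,
$$
obtained by composing the morphisms $\mu(-, v_i) \colon \tilde{K}^{perf} \to V$ (one for each basis element, viewed as a $K$-point) with the group law on $V$. The module axioms make $\varphi_n$ a homomorphism of $\tilde{K}^{perf}$-modules, and the linear independence of the $v_i$ ensures that $(\varphi_n)_K$ is injective. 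By Lemma \ref{L-Morph}, $\varphi_n$ is therefore a monomorphism of perfect unipotent groups, and writing $V = U^{perf}$ for a unipotent algebraic group $U$, this forces $n \leq \dim U$. Hence $d < \infty$.

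With finiteness in hand, pick a $K$-basis $v_1, \ldots, v_d$ of $V(K)$ and form $\varphi = \varphi_d \colon \tilde{K}^{perf,d} \to V$ as above. By construction $\varphi$ is $\tilde{K}^{perf}$-linear, and $\varphi_K \colon K^d \to V(K)$ is bijective since the $v_i$ form a basis. Lemma \ref{L-Morph} then upgrades $\varphi$ to an isomorphism of perfect unipotent groups. The underlying additive group of $\tilde{K}^{perf,d}$ is precisely $(\G_a^d)^{perf}$ (since $\G_a$ is the additive group of $\tilde{K}$), and the natural scaling action of $\tilde{K}^{perf}$ on $(\G_a^d)^{perf}$ agrees with the one obtained from the ring multiplication of $\tilde{K}$, so $\varphi$ provides the required intertwining isomorphism.

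The main obstacle I anticipate is the dimension bound for monomorphisms used in the middle step: one needs that a monomorphism of perfect unipotent groups cannot have a source of strictly larger (algebraic) dimension than its target. This is what rules out an infinite-dimensional $V(K)$, and while it is standard, it is not immediate from Lemma \ref{L-Morph} alone. One resolves it by descending to a finite-type model, using that $V = U^{perf}$ and that the monomorphism $\varphi_n$ comes (after Frobenius twists, cf.\ Remark 2.1) from an honest morphism of finite-type $K$-schemes whose $K$-points are injective, where ordinary dimension theory applies.
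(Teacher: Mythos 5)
Your proposal is correct and follows essentially the same route as the paper's proof: use $\tilde{K}^{perf}(K)=K$ to make $V(K)$ a $K$-vector space, map $\G_a^n$ (perfected) into $V$ via linearly independent vectors, invoke Lemma \ref{L-Morph} to get a monomorphism and hence the bound $n \le \dim V$, then run the same argument with a basis to get the isomorphism. The only difference is that you spell out the dimension-bound justification (descending to a finite-type model) that the paper leaves implicit.
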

\begin{proof}
By construction, we have $\tilde{K}^{perf}(K) = K$, so $V(K)$ is naturally a $K$-vector space. Let $v_1, \dots, v_n \in V(K)$ be any finite linearly independent subset. Using the $\tilde{K}^{perf}$-action on $V$, we obtain a morphism $\G_a^n \stackrel{f}{\longrightarrow} V$ of $\tilde{K}^{perf}$-modules, where the action of $\tilde{K}^{perf}$ on $\G_a^n$ is given simply by scaling. Moreover, the map $K^n \to V(K)$ induced by $f$ is injective, so $f$ is a monomorphism by Lemma \ref{L-Morph}, and hence $n \leq d := \dim V.$ Thus, $V(K)$ must be finite-dimensional over $K$. Furthermore, if in the above construction the set $v_1, \dots, v_n$ is a $K$-basis of $V(K)$, then by the same argument, $f$ is an isomorphism of perfect unipotent groups over $K$, which completes the proof.
\end{proof}

\vskip2mm

\noindent {\it Proof of Proposition \ref{P-PerfRing}.} Let $C$ be the connected component of the additive identity of the center of $A$.
By our assumption, $C$ is nontrivial, so by Lemma \ref{L-PerfRing1}, there exists a morphism $\tilde{K}^{perf} \to C$ of perfect algebraic rings over $K$, which gives $A$ the structure of a $\tilde{K}^{perf}$-algebra object in the category $Perf/K.$ By 
Lemma \ref{L-PerfRing2}, we may assume that the underlying additive group of $A$ comes from a finite-dimensional $K$-vector space. Since the multiplication in $A$ must be given by a $K$-bilinear map, we conclude that there exists a finite-dimensional $K$-algebra $B$ such that $A \simeq \tilde{B}^{perf}$, as needed. \hfill $\Box$


\vskip2mm

We can now complete

\vskip2mm

\noindent {\it Proof of Theorem \ref{T-1}.} Suppose $A$ is a connected algebraic ring over $K$ such that
$pA = 0$ and the ring $A(K)$ has infinite center. Then, according to Proposition \ref{P-PerfRing}, we can find a finite-dimensional $K$-algebra $B$ and an isomorphism $\tilde{B}^{perf} \stackrel{\simeq}{\longrightarrow} A^{perf}$ of perfect algebraic rings over $K$. By Remark 2.1, this isomorphism comes from a morphism $\tilde{B}^{(1/p^n)} \stackrel{g}{\longrightarrow} A$ of algebraic rings over $K$ for some sufficiently large $n.$ It follows from our construction and Lemma \ref{L-Morph} that $g$ induces an isomorphism on the rings of $K$-points. It remains to check that $\tilde{B}^{(1/p^n)}$ also comes from a finite-dimensional $K$-algebra. For this, pick a basis of $B$ as a $K$-vector space and use it to identify $\tilde{B}$ with $\G_a^d$ as an additive group for some $d \in \N.$ The algebra structure on $B$ is then defined by means of certain structure constants. We can also identify $\tilde{B}^{(1/p^n)}$ with $\G^d_a$ in a natural way, so that the canonical map $\tilde{B}^{(1/p^n)} \to \tilde{B}$ gets identified with the map $\G_a^d \to \G_a^d$ that raises all coordinates to the power $p^n$. Then, taking the
the $p^n$-th roots of the structure constants for $B$, we obtain structure constants for an algebra that defines $\tilde{B}^{(1/p^n)}.$
\hfill $\Box$

\section{Proof of Theorem \ref{T-2} and applications}\label{S-4}

In this section, we will apply Theorem \ref{T-1} to rigidity questions and, in particular, use it to establish Theorem \ref{T-2}. As a consequence,
in Proposition \ref{P-Seitz}, we will give a new proof of a rigidity statement due originally to Seitz \cite{Sei}.


We begin with the following proposition, which is an adaptation of \cite[Theorem 3.1]{IR} to our current setting.

\begin{prop}\label{P-1}
Let $K$ be an algebraically closed field of characteristic $p > 0,$ $\Phi$ a reduced irreducible root system of rank $\geq 2$, and $R$ a commutative noetherian ring satisfying $p R = 0$ such that $(\Phi, R)$ is a nice pair. Denote by $G$ the universal Chevalley-Demazure group scheme over $\Z$ of type $\Phi$ and consider a linear representation of the elementary subgroup $\rho \colon E(\Phi, R) \to GL_n (K)$. Then there exists a finite-dimensional commutative $K$-algebra $B$ and a finite ring $C$, together with a ring homomorphism $\tilde{f} \colon R \to B':= B \oplus C$ having Zariski-dense image such that for each root $\alpha \in \Phi$, there is an injective regular map $\varphi_{\alpha} \colon B' \to H$ into $H:= \overline{\rho (E(\Phi, R))}$ satisfying
\begin{equation}\label{E-1}
\rho (e_{\alpha} (t)) = \varphi_{\alpha} (\tilde{f}(t))
\end{equation}
for all $t \in R.$
\end{prop}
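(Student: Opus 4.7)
The plan is to adapt the construction of \cite[Theorem 3.1]{IR} to positive characteristic, producing an algebraic ring $A$ over $K$ naturally attached to $\rho$, and then using Theorem \ref{T-1} to replace the connected part of $A$ by a finite-dimensional $K$-algebra; the finite ring $C$ will absorb the component group of $A$.

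For each root $\alpha \in \Phi$, set $T_\alpha := \overline{\rho(e_\alpha(R))} \subset H$. Since $e_\alpha \colon (R,+) \to G(R)$ is a group homomorphism, each $T_\alpha$ is a commutative algebraic subgroup of $H$; because $pR = 0$, every element $\rho(e_\alpha(t))$ satisfies $x^p = 1$ and is therefore unipotent, so $T_\alpha$ is commutative unipotent. Fix a base root $\alpha_0$ and use the Chevalley commutator formula, together with the Weyl-group elements $w_\alpha(1)$ and the torus elements $h_\alpha(s)$, to transport the additive structure of $T_{\alpha_0}$ to each $T_\alpha$ and to define a multiplication $T_{\alpha_0} \times T_{\alpha_0} \to T_{\alpha_0}$; commutativity of $R$ forces this multiplication to be commutative. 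Thus $T_{\alpha_0}$ inherits the structure of a commutative algebraic ring $A$, and one obtains a ring homomorphism $\tilde f_0 \colon R \to A(K)$, $t \mapsto \rho(e_{\alpha_0}(t))$, with Zariski-dense image, together with injective regular maps $\varphi_\alpha^{(0)} \colon A \to H$ satisfying $\rho(e_\alpha(t)) = \varphi_\alpha^{(0)}(\tilde f_0(t))$. This step runs parallel to \cite[Theorem 3.1]{IR}, the new subtlety being that in characteristic $p$ the groups $T_\alpha$ need not be vector groups.

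Next I would decompose $A$ into connected and finite parts. Let $A^\circ$ denote the connected component of the additive identity, a closed ideal of $A$, and set $C := A/A^\circ$, a finite commutative $\F_p$-algebra (since $A$ is of finite type and its additive group has exponent $p$). Using that $K$ is algebraically closed and $C$ is finite Artinian, one lifts the primitive central idempotents of $C$ to central idempotents of $A$, yielding a splitting $A \simeq A^\circ \oplus C$ as algebraic rings. We have $pA^\circ = 0$, and when $\dim A^\circ \geq 1$ the Zariski-dense image of $R$ makes $A^\circ(K)$ infinite; since $A$ (and hence $A^\circ$) is commutative, this gives the infinite center hypothesis of Theorem \ref{T-1}. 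The degenerate case $A^\circ = 0$ is handled by setting $B = 0$.

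Finally, apply Theorem \ref{T-1} to $A^\circ$ to obtain a finite-dimensional commutative $K$-algebra $B$ and a morphism $\psi \colon \tilde B \to A^\circ$ inducing an isomorphism $\psi_K \colon B \stackrel{\simeq}{\longrightarrow} A^\circ(K)$. Set $B' := B \oplus C$; define $\tilde f \colon R \to B'$ as the composition of $\tilde f_0$ (now viewed as landing in $A^\circ(K) \oplus C$) with $\psi_K^{-1} \oplus \mathrm{id}$, and for each root $\alpha$ let $\varphi_\alpha := \varphi_\alpha^{(0)} \circ (\psi \oplus \mathrm{id}) \colon B' \to H$. Zariski-density of $\tilde f(R)$ in $B'$ and the relation (\ref{E-1}) are preserved; injectivity of $\varphi_\alpha$ on $K$-points follows from bijectivity of $\psi_K$ together with injectivity of $\varphi_\alpha^{(0)}$. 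The main technical obstacle I anticipate is justifying the direct-sum splitting $A \simeq A^\circ \oplus C$ as algebraic rings, which is precisely what replaces, in positive characteristic, the direct appeal to the equivalence of categories $\mathcal{F}$ available in the characteristic-zero argument of \cite{IR}.
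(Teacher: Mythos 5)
Your proposal follows essentially the same route as the paper: invoke the construction of \cite[Theorem 3.1]{IR} to attach a commutative algebraic ring $A$ to $\rho$, split off the finite component group, apply Theorem \ref{T-1} to the connected part to get $B$, and reassemble $\tilde f$ and the maps $\varphi_\alpha$ by composing with the resulting bijection on $K$-points. The one step where you diverge is the splitting $A \simeq A^\circ \oplus C$: the paper does not re-prove this but cites \cite[Proposition 2.11 and Lemma 2.13]{IR}, and it is exactly there that the noetherian hypothesis on $R$ (which your argument never uses) enters. Your proposed substitute --- lifting the central idempotents of $C = A/A^\circ$ to $A$ --- is not justified as stated, since idempotents lift along nil or complete ideals but $A^\circ$ is neither; you correctly flag this as the main obstacle, and the clean fix is simply to cite the existing lemma rather than re-derive it.
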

\begin{proof}
First, since $(\Phi, R)$ is assumed to be a nice pair, it follows from \cite[Theorem 3.1]{IR} that there exists a commutative algebraic ring $A$, together with a ring homomorphism $f \colon R \to A(K)$ having Zariski-dense image such that
for every root $\alpha \in \Phi$, there is an injective regular map $\psi_{\alpha} \colon A(K) \to H$ satisfying
$$
\rho (e_{\alpha} (t)) = \psi_{\alpha} (f(t))
$$
for all $t \in R.$ Next, our assumption that $R$ is noetherian implies that we can write
$$
A(K) = A' (K) \oplus C,
$$
where $A'$ is a connected algebraic ring and $C$ is a finite ring (see \cite[Proposition 2.11 and Lemma 2.13]{IR}).
Let $\pi \colon A(K) \to A'(K)$ be the natural projection and set $f' = \pi \circ f.$
Since $R$ has characteristic $p$, it follows that $p A' = 0.$ Therefore, by Theorem \ref{T-1}, there exists a finite-dimensional commutative $K$-algebra $B$ and a morphism of algebraic rings $\varphi \colon \tilde{B} \to A'$ that induces a ring isomorphism $\varphi_K \colon B= \tilde{B} (K) \to A'(K)$ on the rings of $K$-points.
Let $\psi_K \colon A'(K) \to B$ be the inverse homomorphism (of abstract rings) and define $\tilde{f}_0 \colon R \to B$ by $\tilde{f}_0 = \psi_K \circ f'.$

By dimension considerations, it is easy to see that $\tilde{f}_0 (R)$ is Zariski-dense in $B$. Now set $B' := B \oplus C$ and define $\varphi_K' \colon B' \to A(K)$ by $\varphi_K' \vert_B = \varphi_K$ and $\varphi_K' \vert_C = \text{id}_C.$
Also, for any root $\alpha \in \Phi,$ let $\varphi_{\alpha} \colon B' \to H$ be the composition $\varphi_{\alpha} = \psi_{\alpha} \circ \varphi_K'.$ Finally, let $\pi_C \colon A(K) \to C$ be the natural projection and define $\tilde{f} \colon R \to B'$ by $\tilde{f} = \tilde{f}_0 \oplus (\pi_C \circ f).$ Then 
$\tilde{f}$ has Zariski-dense image, and from the construction, it is clear that condition (\ref{E-1}) is satisfied.
\end{proof}

Using Proposition \ref{P-1}, the proof of Theorem \ref{T-2} can now be carried out following the strategy developed in \cite{IR}, to which we refer the reader for full details. We sketch the main points of the argument here for completeness.

\vskip2mm

\noindent {\it Proof of Theorem \ref{T-2}.} Let $K$, $\Phi$, and $R$ be as in the statement of the theorem, and consider a representation
$$
\rho \colon E(\Phi, R) \to GL_n (K).
$$
By Proposition \ref{P-1}, we can associate to $\rho$ a ring homomorphism $\tilde{f} \colon R \to B' = B \oplus C$ with Zariski-dense image, where $B$ is a finite-dimensional commutative $K$-algebra, together with regular maps $\varphi_{\alpha} \colon B' \to H$ (for $\alpha \in \Phi$) satisfying (\ref{E-1}). Using this, as well as \cite[Proposition 4.2]{IR}, we lift $\rho$ to a representation $\tilde{\tau} \colon \text{St}(\Phi, B') \to GL_n (K)$ of the corresponding Steinberg group. The fact that $R$ is semilocal then implies that $\tilde{\sigma} : = \tilde{\tau} \vert_{\text{St} (\Phi, B)}$ descends to a representation $\sigma \colon G(B) \to GL_n (K)$ such that
$$
\rho \vert_{\Gamma} = (\sigma \circ F) \vert_{\Gamma}
$$
on a suitable finite-index subgroup $\Gamma \subset E(\Phi, R)$, where $F$ is the group homomorphism induced by $\pi \circ \tilde{f}$ with $\pi \colon B' \to B$ being the natural projection. Finally, we view $G(B)$ as an algebraic group over $K$ using the functor of restriction of scalars and conclude from \cite[Proposition 6.3]{IR} that $\sigma$ is in fact a morphism of algebraic groups over $K$. \hfill $\Box$

\vskip2mm

We note that other results from \cite{IR} can also be developed in the present setting in a similar fashion.

\vskip5mm

\noindent {\bf Remark 4.2.} We should point out since results in the spirit of the conjecture of Borel-Tits \cite{BT} deal with algebras, the use of Theorem \ref{T-1} in positive characteristic cannot in general be avoided. Namely, let us again consider
the algebraic ring $A$ whose underlying additive group is $\G_a \oplus \G_a$ and whose multiplication operation is given by
$$
(x_1, y_1) \cdot (x_2, y_2) = (x_1 x_2, x_1^p y_2 + x_2^p y_1).
$$
Notice that the map
$$
(x,y) \mapsto \left( \begin{array}{ccc} x^p & 0 & y \\ 0 & x & 0 \\ 0 & 0 & x^p \end{array} \right).
$$
defines an embedding ${A} (K) \stackrel{\iota}{\hookrightarrow} M_3 (K).$
Let now $B = K[\varepsilon]$ be the $K$-algebra of dual numbers and $\tilde{B}$ be the corresponding algebraic ring. Recall that we have a morphism
$$
\varphi \colon \tilde{B} \to A \ \ \ \ (x,y) \mapsto (x, y^p)
$$
that induces an isomorphism $\varphi_K \colon B \to A(K)$ on the rings of $K$-points.

Next, set $R = K[X]$ and consider the surjective homomorphism
$$
\chi \colon R \to B, \ \ \ g(X) \mapsto g(0) + g'(0) \varepsilon,
$$
where $g'(X)$ is the (formal) derivative of $g(X).$ Then the ring homomorphism $$f = \varphi_K \circ \chi \colon R \to A(K)$$ yields, via the embedding $\iota$, a representation
$$
\rho \colon SL_3 (R) \to GL_9 (K).
$$
Since the algebraic ring $A_{\rho}$ associated to $\rho$ is constructed as the Zariski-closure of the image of a root subgroup (see \cite[Theorem 3.1]{IR}), it follows that $A_{\rho} = A$. In particular, $A_{\rho}$ does not arise from a $K$-algebra.

\vskip5mm

To conclude this section, we will now give an alternative proof (for Chevalley groups of rank $\geq 2$) of a result of
Seitz \cite{Sei} on abstract homomorphisms of the groups of rational points of algebraic groups.
Let $k$ be an infinite perfect field of characteristic $p > 0$ and let $K$ be an algebraically closed field of the same characteristic. Denote by $G$ the universal Chevalley group associated to a reduced irreducible root system $\Phi,$ and suppose that $Y$
is an algebraic group over $K.$ Seitz showed that any (nontrivial) abstract homomorphism $G(k) \to Y(K)$ can be factored as
\begin{equation}\label{E-Seitz}
G(k) \stackrel{\varphi}{\longrightarrow} G(K) \times \cdots \times G(K) \stackrel{\sigma}{\longrightarrow} Y(K),
\end{equation}
where $\sigma$ is a morphism of algebraic groups and each component $G(k) \to G(K)$ of $\varphi$
arises from a field homomorphism $k \to K$ (see \cite[Theorem 1]{Sei}). On the other hand, our Theorem \ref{T-2} gives a description of abstract homomorphisms over arbitrary (i.e. also imperfect) fields of characteristic $p > 0$, and enables us to recover the result of Seitz (see Proposition \ref{P-Seitz} below) by observing that a \emph{perfect} field of positive characteristic does not have nontrivial derivations. This approach also explains why Seitz's result is no longer true over imperfect fields: the nontrivial derivations can be used to produce abstract homomorphisms of a different nature from the ones in (\ref{E-Seitz}) --- see the construction of Borel and Tits in \cite[8.18]{BT}.
In the statement below, we are implicitly using the well-known equality $E(\Phi, k) = G(k)$.

\addtocounter{thm}{1}

\begin{prop}\label{P-Seitz}
Let $k$ be an infinite perfect field of characteristic $p > 0$, $K$ an algebraically closed field of the same characteristic, and $\Phi$ a reduced irreducible root system of rank $\geq 2$ such that $(\Phi, k)$ is a nice pair.
Let $G$ be the universal Chevalley-Demazure group scheme of type $\Phi,$ and consider a representation $\rho \colon G(k) \to GL_n (K).$ Then there exist

\vskip2mm

\noindent {\rm (i)} a finite-dimensional $K$-algebra
$$
B \simeq K^{(1)} \times \cdots \times K^{(r)}
$$
with $K^{(i)} \simeq K$ for all $i$;

\vskip1mm

\noindent {\rm (ii)} an embedding $f \colon k \hookrightarrow B$ with Zariski-dense image; and

\vskip1mm

\noindent {\rm (iii)} a morphism of algebraic groups $\sigma \colon G(B) \to GL_n (K)$

\vskip1mm

\noindent such that
$$
\rho = \sigma \circ F,
$$
where $F \colon G(k) \to G(B)$ is the group homomorphism induced by $f.$

\end{prop}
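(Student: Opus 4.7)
The plan is to specialize Theorem \ref{T-2} to $R = k$ (an infinite field of characteristic $p$, trivially semilocal noetherian with $pk = 0$) and then to refine its output using the perfectness of $k$. Theorem \ref{T-2} directly produces a finite-dimensional commutative $K$-algebra $B$, a ring homomorphism $f \colon k \to B$ with Zariski-dense image, a morphism of algebraic groups $\sigma \colon G(B) \to H \subset GL_n(K)$, and a finite-index subgroup $\Gamma \subset G(k) = E(\Phi, k)$ on which $\rho = \sigma \circ F$. To match the conclusion of Proposition \ref{P-Seitz}, two further points must be verified: (a) that $B \simeq K \times \cdots \times K$, and (b) that the equality $\rho = \sigma \circ F$ actually holds on all of $G(k)$.

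The key new step is (a), and this is where the perfectness of $k$ enters decisively. Since $K$ is algebraically closed, the nilradical $N \subset B$ gives a canonical $K$-vector space splitting $B = B_{\mathrm{red}} \oplus N$ with $B_{\mathrm{red}} \simeq K^r$. Choose $m$ with $N^{p^m} = 0$. In characteristic $p$ the ``freshman's dream'' identity
$$
(b_0 + n)^{p^m} \;=\; b_0^{p^m} + n^{p^m} \;=\; b_0^{p^m}, \qquad b_0 \in B_{\mathrm{red}}, \ n \in N,
$$
shows that the $p^m$-th power map on $B$ has image contained in $B_{\mathrm{red}}$. Since $k$ is perfect, raising to the $p^m$-th power is a bijection of $k$, so
$$
f(k) \;=\; \{ f(a^{p^m}) : a \in k\} \;=\; \{ f(a)^{p^m} : a \in k\} \;\subset\; B_{\mathrm{red}}.
$$
As $f(k)$ is Zariski-dense in $B$ and $B_{\mathrm{red}}$ is a linear, hence Zariski-closed, subspace of $B$, this forces $B = B_{\mathrm{red}} \simeq K^r$. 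Decomposing $f$ componentwise then expresses it as an $r$-tuple of ring homomorphisms $k \to K$, each of which is either zero or injective; a vanishing component would place $f(k)$ in a hyperplane, contradicting density, so every component is an embedding and $f \colon k \hookrightarrow B$ is injective.

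For (b), I use that under the present hypotheses the group $G(k) = E(\Phi, k)$ has no proper subgroup of finite index. By classical results of Chevalley and Tits, $G(k)$ is perfect and $G(k)/Z$ is simple as an abstract group, where $Z := Z(G(k))$ is finite. For any finite-index normal subgroup $N$ of $G(k)$, the subgroup $NZ/Z$ is normal in the simple group $G(k)/Z$; the case $NZ = Z$ would force $N \subset Z$ to be finite of infinite index, a contradiction, so $NZ = G(k)$, and $G(k)/N \simeq Z/(N \cap Z)$ is abelian. Perfectness of $G(k)$ then forces $N = G(k)$. Passing to normal cores reduces arbitrary finite-index subgroups to the normal case, so $\Gamma = G(k)$ and $\rho = \sigma \circ F$ holds globally. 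The main obstacle is the reducedness step (a), and the perfectness of $k$ is indispensable there: over an imperfect field, a nonzero derivation $D \colon k \to K$ produces a ring homomorphism $a \mapsto a + D(a)\varepsilon$ with Zariski-dense image in $K[\varepsilon]/(\varepsilon^2)$, giving rise precisely to the ``exotic'' abstract homomorphisms of Borel--Tits \cite{BT} that Seitz's description does not cover.
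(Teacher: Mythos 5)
Your proposal is correct, and for the key step --- showing $B$ is reduced --- it takes a genuinely different route from the paper. Both arguments start from the Wedderburn--Malcev splitting $B = B' \oplus J$ with $B' \simeq K^r$ and both exploit Zariski-density of $f(k)$, but where you observe that the $p^m$-th power map (with $J^{p^m}=0$) kills the radical and that perfectness of $k$ gives $f(k) = f(k)^{p^m} \subset B'$, forcing $J = 0$ at once, the paper instead reduces to a local factor $B = K \oplus J$, passes to $B/J^2$, expands $g(x) = f_0(x) + \sum_i \delta_i(x)\varepsilon_i$ with $f_0 \colon k \to K$ a field homomorphism and $\delta_i \in \mathrm{Der}^{f_0}(k,K)$, notes that a perfect field admits no nonzero derivations, and concludes $J = J^2 = 0$ by Nakayama. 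Your Frobenius argument is shorter and avoids both the reduction to the local case and Nakayama's lemma; the paper's derivation-based argument is longer but makes transparent exactly which structure (a nonzero derivation $k \to K$) obstructs the result over imperfect fields, which is the point you recover only in your closing remark. For the finite-index step your argument via perfectness of $G(k)$ and simplicity of $G(k)/Z$ is a spelled-out version of the paper's one-line appeal to Tits' theorem that $G(k)$ has no proper noncentral normal subgroups, so there is no substantive difference there.
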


Before turning to the proof, we need to introduce the following notation. Let
$R$ be a commutative ring, $K$ a field, and $g \colon R \to K$ a ring homomorphism. We will denote by
$\mathrm{Der}^g (R,K)$ the space of $K$-valued derivations of $R$ with respect to $g$, i.e. an element $\delta \in \mathrm{Der}_g (R,K)$ is a map $\delta \colon R \to K$ such that for any $r_1, r_2, \in R$,
$$
\delta (r_1 + r_2) = \delta (r_1) + \delta(r_2) \ \ \ \mathrm{and} \ \ \  \delta (r_1 r_2) = \delta(r_1) g(r_2) + \delta (r_2) g(r_1).
$$

\vskip2mm

\noindent {\it Proof of Proposition 4.3.}
By Theorem \ref{T-2}, there exists a finite-dimensional commutative $K$-algebra
$B$ and an embedding $f \colon k \hookrightarrow B$ with Zariski-dense image, together with the required morphism of algebraic groups $\sigma \colon G(B) \to GL_n (K).$ Moreover, since $k$ is an infinite field, it is well-known that $E(\Phi, k) = G(k)$ contains no proper noncentral normal subgroups (see \cite{T1}), so $\Gamma = E(\Phi, k)$.
Thus, it remains to show that $B$ is isomorphic to a product of copies of $K$.

For this, let $J$ be the Jacobson radical of $B$. Since $K$ is perfect, by the Wedderburn-Malcev Theorem, we can find a semisimple subalgebra $B' \subset B$ such that $B = B' \oplus J$ as $K$-vector spaces and
$$
B' \simeq B/J \simeq \underbrace{K \times \cdots \times K}_{r \ \text{copies}}
$$
as $K$-algebras (cf. \cite[Corollary 11.6]{P}). Let $e_i \in B'$ be the $i$th standard basis vector. Since $e_1, \dots, e_r$ are idempotent, and we have $e_1 + \cdots + e_r = 1$ and $e_i e_j = 0$ for $i \neq j,$ it follows that we can write
$B = \oplus_{i=1}^{r} B_i,$ where $B_i = e_i B.$ Clearly, $B_i = B'_i \oplus J_i$, with $B'_i = e_i B' \simeq K$ and $J_i = e_i J$; in particular, $B_i$ is a local $K$-algebra with maximal ideal $J_i$. To complete the proof, it suffices to show that $J_i = \{ 0 \}$ for all $i,$ so, we may now assume that $B$ is a commutative finite-dimensional local algebra of the form $B = K \oplus J.$ Let $B'' = B/J^2,$ and fix a $K$-basis $\{\varepsilon_1, \dots, \varepsilon_s \}$ of $J/J^2.$ Denote by $g \colon k \to B''$ the composition of $f$ with the canonical map $B \to B''$
Then for any $x \in k,$ we have
$$
g (x) = f_0(x) + \delta_1 (x) \varepsilon_1 + \cdots + \delta_s (x) \varepsilon_s,
$$
where $f_0 \colon k \to K$ is a field homomorphism and $\delta_1, \dots, \delta_s \in \mathrm{Der}^{f_0} (k,K).$ But since $k$ is perfect (i.e. any element is a $p^{\text{th}}$ power), all $K$-valued derivations on $k$ are trivial, so $J/J^2 = 0$ as $g$ has Zariski-dense image. Hence $J = J^2,$ and consequently $J = \{ 0 \}$ by Nakayama's Lemma. This completes the proof. $\Box$

\vskip5mm

\bibliographystyle{amsplain}

\end{document}